\newtheorem*{thm}{Theorem}
\newtheorem*{proposition}{Proposition}
\newtheorem*{lem}{Lemma}
\theoremstyle{definition}
\theoremstyle{remark}
\begin{document}

\title[]{Surrounding the Solution of a Linear \\System of equations from all sides}
\subjclass[2010]{15A09, 15A18, 60D05, 65F10, 90C06} 
\keywords{Linear Systems, Kaczmarz method, Random Geometry}
\thanks{S.S. is supported by the NSF (DMS-1763179) and the Alfred P. Sloan Foundation.}

\author[]{Stefan Steinerberger}
\address{Department of Mathematics, University of Washington, Seattle}
\email{steinerb@uw.edu}

\begin{abstract} Suppose $A \in \mathbb{R}^{n \times n}$ is invertible and we are looking for the solution of $Ax = b$.  Given an initial guess $x_1 \in \mathbb{R}$, 
we show that by reflecting through hyperplanes generated by the rows of $A$, we can generate an infinite sequence $(x_k)_{k=1}^{\infty}$ such that all elements have the same
distance to the solution $x$, i.e. $\|x_k - x\| = \|x_1 - x\|$. If the hyperplanes are chosen at random, averages over the sequence converge and
$$ \mathbb{E} \left\| x - \frac{1}{m} \sum_{k=1}^{m}{ x_k}  \right\| \leq  \frac{1 + \|A\|_F  \|A^{-1}\|}{\sqrt{m}} \cdot\|x-x_1\|.$$
The bound does not depend on the dimension of the matrix.
This introduces a purely geometric way of attacking the problem: are there fast ways of estimating the location of the center of a sphere from knowing many points on the sphere? Our convergence rate (coinciding with that of the Random Kaczmarz method) comes from simple averaging, can one do better? \end{abstract}

\maketitle

\section{Introduction and Results}
\subsection{Introduction.} Let $A \in \mathbb{R}^{n \times n}$ denote an invertible matrix and suppose we are interested in finding the (unique) solution of $Ax = b$. The purpose of this paper is to
point out a simple geometric fact which leads to an iterative method for finding approximations of the solution. We will use $a_i \in \mathbb{R}^n$ to denote the
$i-$th row of the matrix $A$. Interpreting the solution $x$ as the unique intersection of hyperplanes, $\left\langle a_i, x\right\rangle = b_i$ for all $1 \leq i \leq n$, we see that the process of reflecting a point through a hyperplane does not change the distance to the solution $x$ (see Fig. 1).
Therefore, given any point $x_1 \in \mathbb{R}^d$ and any index $1 \leq i \leq n$, 
$$ x_2 = x_1 + 2 \cdot \frac{ b_i - \left\langle x_1, a_i \right\rangle}{\|a_i\|^2} a_i \qquad \mbox{satisfies} \qquad \|x_2 - x\| = \|x_1 - x\|,$$ the distance to the solution remains preserved. 
\begin{figure}[h!]
\begin{center}
\begin{tikzpicture}[scale=0.9]
\draw[thick] (-2, -1) -- (2, 1);
\draw[thick] (0, -1) -- (0, 1);
\draw[thick] (2, -1) -- (-2, 1);
\filldraw (0,0) circle (0.06cm);
\node at (0.4, 0) {$x$};
\filldraw (1.5, 1.2) circle (0.03cm);
\node at (1.25, 1.1) {$x_1$};
\filldraw (1.85, 0.5) circle (0.03cm);
\node at (2.2, 0.4) {$x_2$};
\draw (1.5, 1.2) -- (1.85, 0.5);
\end{tikzpicture}
\end{center}
\caption{Reflecting through a hyperplane, does not change the distance to the solution $x$.}
\end{figure}
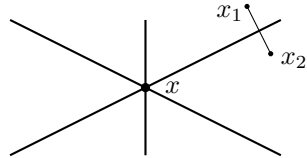
This can be iterated and any choice of sequence of hyperplanes will lead to sequence $(x_k)_{k=1}^{\infty}$ such that the distance to the solution $\|x - x_k\| = \|x - x_1\|$ remains constant along the sequence. We also note that generating the sequence is computationally cheap, the cost of computing $x_{k+1}$ from $x_k$ is merely the cost of computing one inner product and thus $\mathcal{O}(n)$ (since one only requires one row of the matrix, it is also only $\mathcal{O}(n)$ in memory).
\subsection{Random Reflections.}  We have discussed how reflection through hyperplanes given by the rows of the matrix can be used to produce a sequence of points all of which have the same distance to the solution $x$. The arising sequence depends on the initial guess $x_1$ and the sequence in which the hyperplanes are chosen. We propose to pick the hyperplanes randomly: more precisely, we suggest to pick the $i-$th hyperplane with likelihood proportional to $\|a_i\|^2$. As discussed above, this gives rise to an infinite sequence $(x_k)_{k=1}^{\infty}$ such that all elements are at the same distance from the solution, i.e. 
$$\|x_k - x\| = \|x_1 - x\|.$$
We can estimate the convergence rate of their averages.

\begin{thm} If the $i-$th hyperplane is picked with likelihood proportional to $\|a_i\|^2$, the arising random sequence of points $(x_k)_{k=1}^{\infty}$ satisfies
$$ \mathbb{E} \left\| x - \frac{1}{m} \sum_{k=1}^{m}{ x_k}  \right\| \leq  \frac{1 + \|A\|_F  \|A^{-1}\|}{\sqrt{m}} \cdot\|x-x_1\|,$$
where $\|A\|_F$ is the Frobenius norm and $\|A^{-1}\|$ the operator norm of the inverse.
\end{thm}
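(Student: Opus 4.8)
The plan is to track the error vectors $e_k := x_k - x$ rather than the iterates themselves. Since $b_i = \langle a_i, x\rangle$, the reflection formula rewrites as $e_{k+1} = R_i\, e_k$, where $R_i = I - 2\|a_i\|^{-2} a_i a_i^{T}$ is the orthogonal reflection across the hyperplane $a_i^{\perp}$ through the origin; in particular $\|e_k\| = \|e_1\| = \|x - x_1\|$ for every $k$, which is precisely the distance preservation already observed. The advantage of working with $e_k$ is that the randomness now enters linearly: if the index $i$ is drawn with probability $\|a_i\|^2/\|A\|_F^2$, then
$$ M := \mathbb{E}[R_i] = \sum_{i} \frac{\|a_i\|^2}{\|A\|_F^2}\left(I - \frac{2}{\|a_i\|^2} a_i a_i^{T}\right) = I - \frac{2}{\|A\|_F^2} A^{T}A, $$
a symmetric matrix whose eigenvalues are $1 - 2\sigma_i^2/\|A\|_F^2$, with $\sigma_i$ the singular values of $A$.

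Since $\|\cdot\|$ is convex, Jensen gives $\mathbb{E}\|\tfrac1m\sum_k e_k\| \le (\mathbb{E}\|\tfrac1m\sum_k e_k\|^2)^{1/2}$, so it suffices to control the second moment $\mathbb{E}\|\sum_{k=1}^m e_k\|^2 = \sum_{k,l}\mathbb{E}\langle e_k, e_l\rangle$, and the key step is to estimate the correlations. The reflections applied at steps $k, k+1, \dots, l-1$ are independent of the history up to step $k$, so for $l \geq k$ one has $\mathbb{E}[e_l \mid e_1,\dots,e_k] = M^{\,l-k} e_k$, whence
$$ \mathbb{E}\langle e_k, e_l\rangle = \mathbb{E}\langle e_k, M^{l-k} e_k\rangle \le \|M\|^{\,l-k}\, \mathbb{E}\|e_k\|^2 = \|M\|^{\,l-k}\|e_1\|^2, $$
using that $M$ is symmetric, so $\langle v, M^j v\rangle \le \|M\|^{\,j}\|v\|^2$, together with $\|e_k\| = \|e_1\|$. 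Thus the distance to the solution never decreases, yet the correlations between far-apart iterates decay geometrically; this tension is what makes the average converge and is the crux of the argument.

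Summing this geometric bound, and separating the diagonal $k=l$ (which contributes exactly $m\|e_1\|^2$) from the off-diagonal terms, yields
$$ \mathbb{E}\left\|\sum_{k=1}^m e_k\right\|^2 \le m\|e_1\|^2 + 2m\|e_1\|^2\,\frac{\|M\|}{1-\|M\|}. $$
It remains to estimate the spectral gap $1-\|M\|$. The eigenvalue of $M$ nearest $1$ is $1 - 2\sigma_{\min}^2/\|A\|_F^2$, and a short check (using $\sigma_{\max}^2 + \sigma_{\min}^2 \le \|A\|_F^2$ for $n\ge2$) shows that it is also the one of largest modulus, so $\|M\| = 1 - 2\sigma_{\min}^2/\|A\|_F^2$ and hence $1 - \|M\| = 2\sigma_{\min}^2/\|A\|_F^2 = 2/(\|A\|_F^2\|A^{-1}\|^2)$, since $\sigma_{\min} = \|A^{-1}\|^{-1}$. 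Feeding this into the displayed bound, applying $\sqrt{a+b}\le\sqrt a+\sqrt b$, and dividing by $m$ produces
$$ \mathbb{E}\left\|x - \frac1m\sum_{k=1}^m x_k\right\| \le \frac{\|x-x_1\|}{\sqrt m}\left(1 + \|A\|_F\|A^{-1}\|\right), $$
which is the claim. I expect the main obstacle to be the correlation estimate, i.e. recognizing that $\mathbb{E}\langle e_k,e_l\rangle$ should be bounded through the averaged reflection $M$ and its operator norm, together with the spectral-gap computation identifying $1-\|M\|$ with $2/(\|A\|_F^2\|A^{-1}\|^2)$. The dimension-independence of the constant is then immediate, as the whole estimate is governed only by $\sigma_{\min}$ and $\|A\|_F$.
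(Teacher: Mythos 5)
Your proposal is correct and follows essentially the same route as the paper: reduce to the error vectors, observe that the averaged reflection is $M = I - \tfrac{2}{\|A\|_F^2}A^TA$, bound the correlations $\mathbb{E}\langle e_k,e_l\rangle$ geometrically via $\|M\| = 1 - 2/(\|A\|_F^2\|A^{-1}\|^2)$, split the second moment into diagonal and off-diagonal terms, and finish with the geometric series, Jensen, and $\sqrt{a+b}\le\sqrt a+\sqrt b$. Your phrasing of the decorrelation step through the conditional expectation $\mathbb{E}[e_l\mid e_k]=M^{l-k}e_k$ is just a cleaner packaging of the paper's inductive Lemma, not a different argument.
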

\textbf{Remarks.}
\begin{itemize}
\item Naturally, one would run the algorithm for a while, replace $x_m$ by the average of $x_1, \dots, x_m$ and then start the
algorithm again with this average as new initial point. This always decreases the distance to the solution.
\item  The Theorem implies that in order to have the average be a factor 2 closer to the solution than the initial guess $\|x -x_1\|$ requires $\sim \|A^{-1}\|^2 \|A\|_F^2$ steps. This
is equivalent to the performance of the Random Kaczmarz method (see \S 1.4) up to constants.
\item It is an interesting question whether the rate (or the dependency on the matrix) can be improved by replacing the average of $x_1, \dots, x_m$ by another method that produces a better guess for the center of the sphere, see \S 1.3.
\item It is an interesting question whether the rate (or the dependency on the matrix) can be improved by picking the hyperplanes deterministically in a clever (and computationally cheap) way, see also \S 3.
\end{itemize}

\subsection{A Geometry Problem.} 
What we believe to be particularly interesting is that this approach leads to a new way of attacking the underlying problem: we can now (cheaply) convert any linear system to the problem of finding the center of a sphere from points on the sphere.
\begin{quote}
\textbf{Problem.} Given at least $n+1$ points on a sphere in $\mathbb{R}^{n}$, how would one quickly determine an approximation of the center of the sphere? Does it help if one has $c\cdot n$ points?
\end{quote}
This seems like a very basic mathematical problem; surely there should be many different ways of attacking it!  We have little control over where the $c \cdot n$ points lie, some of them might be localized in small regions of the sphere which certainly makes the problem harder (see \S 3)-- then again, the current goal would be to improve on the average of the points as an estimate for the center of the sphere. 
It will not come as a surprise that there is a classical way of solving the problem: given $x_1, \dots, x_{n+1}$ points on a sphere in $\mathbb{R}^{n}$, we note that the vector $r$ with the property that $x_1 + r$ is the center of the sphere satisfies
$$\forall~2 \leq i \leq n+1 \qquad \quad \left\langle x_i - x_1 , 2r \right\rangle = \|x_i - x_1\|^2$$
This is the classical Theorem of Thales in disguise (see Fig. 2). This leaves us with a linear system of equations to determine $r$. As such, it seems that we did not gain very much
since we exchanged the solution of one linear system by another: however, it is conceivable that the new linear system is better conditioned and can be solved quicker. If that were
the case, we would have gained a fair amount. Moreover, we are not limited to using $n+1$ points on the sphere, we can actually choose more: this would then allow us to replace
an $n \times n$ matrix by an overdetermined $cn \times n$ system which may lead to additional stability.

\begin{figure}[h!]
\begin{center}
\begin{tikzpicture}[scale=1.5]
   \draw [black,thick,domain=0:180] plot ({2*cos(\x)}, {2*sin(\x)});
   \draw [thick] (-2,0) -- (2,0);
   \filldraw (2,0) circle (0.06cm);
   \node at (2.2, -0.3) {$x_1$};
      \filldraw (-0.75,1.85) circle (0.06cm);
      \node at (-1, 2.1) {$x_i$};
   \node at (2.2, -0.3) {$x_1$};
   \draw[->, ultra thick] (2,0) -- (-2,0);
   \node at (-2, -0.2) {$2r$};
   \draw[->,  thick] (2,0) -- (-0.7, 1.77);
\end{tikzpicture}
\end{center}
\caption{Thales' Theorem guarantees $\left\langle x_i - x_1, 2r \right\rangle = \|x_i - x_1\|^2$.}
\end{figure}
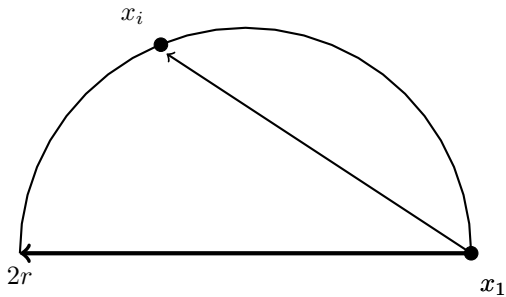 
\textbf{An Example.} We illustrate this with a numerical example. We generate 1000 random matrices $A \in \mathbb{R}^{50 \times 50}$ by picking each entry to be $\sim \mathcal{N}(0,1)-$distributed (in an i.i.d. fashion) and then normalize the rows to have norm 1. For each such matrix, we generate $B \in \mathbb{R}^{200 \times 50}$ by starting in a fixed point and performing 5000 random reflections through hyperplanes: we take every 25-th element of the sequence (for the purpose of decorrelation, consecutive elements have stronger correlation) and use the arising 200 points to generate the matrix $B$ that arises from trying to find the center of the sphere as outlined above. We see that
$$ \mathbb{E} ~\frac{1}{\|A^{-1}\| \cdot \|A\|_F} \sim 0.0019 \quad \mbox{and} \quad  \mathbb{E}~ \frac{1}{\|B^{-1}\| \cdot \|B\|_F} \sim 0.0045.$$
The condition number has a similar improvement.
This shows that, at least with respect to our method or the Random Kaczmarz method, solving the new linear system is more than twice as fast as solving the original one: naturally, there is the pre-computation cost of computing the 5000 elements of the random sequence and assembling the new matrix. Ultimately, whether switching to this different linear system is reasonable depends on how accurate of an approximation one wants. \\

We conclude by emphasizing once more that there should be other ways by which one can cheaply approximate the center of a sphere from sufficiently many points on the sphere: any method that improves on the simple idea of averaging would lead, by the mechanism above, to an iterative method for solving linear system of equations that does at least as well as the Random Kaczmarz method.

\subsection{Related methods}  We quickly discuss two related methods: first and foremost, there is a great degree of similarity to the Random Kaczmarz method where, instead of projecting through the hyperplane, one projects $x_k$ onto a randomly chosen hyperplane to obtain a better approximation $x_{k+1}$ of the solution. This is also known as the \textit{Algebraic Reconstruction Technique} (ART) in computer tomography \cite{gordon, her, her2, natterer}, the \textit{Projection onto Convex Sets Method} \cite{cenker, deutsch, deutsch2, gal, sezan} and the \textit{Randomized Kaczmarz method} \cite{eldar, elf, gordon2, gower, gower2, lee, leventhal, liu, ma, moor, need, need2, need25, need3, need4, nutini, popa, stein, stein2, stein3, strohmer, zhang, zouz}.
 Strohmer \& Vershynin \cite{strohmer} showed that 
$$ \mathbb{E} \left\| x_k - x \right\|_2^2 \leq \left(1 - \frac{1}{\|A\|_F^2 \| A^{-1}\|_2^2}\right)^k \|x_0 - x\|_2^2,$$
where $\|A^{-1}\|_2$ is the operator norm of the inverse and $\|A\|_F$ is the Frobenius norm.
 In order to improve the distance to the solution by a factor of $2$, we require
$$ k \sim \|A\|_F^2 \cdot \|A^{-1}\|^2$$
iteration steps which is the same as in our method (up to constants).
As a second related method, we emphasize the Cimmino method \cite{cim} which sets
$$ x_{k+1} = x_k + \frac{\lambda}{M} \sum_{i=1}^{n} \frac{b_i - \left\langle x_k, a_i\right\rangle}{\|a_i\|^2} a_i,$$
where $\lambda$ is a real parameter: for $\lambda = 2$, we end up projecting through each hyperplane in the same distance-preserving manner and then end up averaging. Cimmino's method is as costly as $n$ steps of Randomized Kaczmarz or our method. 
These two methods form the basis of certain types of iterative procedures that are frequently employed in computed tomography \cite{jiang}. Other such methods are the Simultaneous Algebraic Reconstruction Technique  (SART) \cite{sart}, Component Averaging (CAV) \cite{cav} and Diagonally Relaxed Orthogonal Projections (DROP) \cite{drop}, we also refer to 
\cite{elf2, hansen, her, natterer}.

\section{Proof of the Theorem}
A crucial ingredient is the random linear operator $R:\mathbb{R}^n \rightarrow \mathbb{R}^n$ given by
$$ Rx = x + 2 \cdot \frac{ b_i - \left\langle x, a_i \right\rangle}{\|a_i\|^2} a_i \qquad \mbox{with likelihood}~~\frac{\|a_i\|_2}{\|A\|_F^2}.$$
We emphasize that $R$ will always denote a random operator and we will use $R^k = R_1 R_2 \dots R_k$ to denote the application of $k$ independent copies of $R$. Making the ansatz $x_k = x + y_k$, we observe that
\begin{align*}
 x + y_{k+1} &=  x_{k+1} = x_k + 2 \cdot \frac{ b_i - \left\langle x_k, a_i \right\rangle}{\|a_i\|^2} a_i \\
 &= x + y_k + 2 \cdot \frac{ b_i - \left\langle x + y_k, a_i \right\rangle}{\|a_i\|^2} a_i\\
 &= x + y_k - 2 \cdot \frac{ \left\langle  y_k, a_i \right\rangle}{\|a_i\|^2} a_i.
\end{align*}
Subtracting $x$ from both sides leads to the iteration
$$ y_{k+1} = y_k - 2 \cdot \frac{ \left\langle  y_k, a_i \right\rangle}{\|a_i\|^2} a_i$$
which corresponds to the reflection operator applied to the problem $Ax = 0$. This 
 shows that the problem is invariant under translation (hardly surprising from the way it has been constructed). It thus suffices to study the random operator when applied to the problem $Ax = 0$ which we again denote by $R$, i.e.
$$ R x =  x - 2 \cdot \frac{\left\langle x, a_i \right\rangle}{\|a_i\|^2} a_i \qquad \mbox{with likelihood}~~\frac{\|a_i\|_2}{\|A\|_F^2}.$$
Note that $\|Rx\| = \|x\|$ since
$$ \left\langle x -  \frac{\left\langle x, a_i \right\rangle}{\|a_i\|^2} a_i, a_i \right\rangle =0$$
and thus, for all $\lambda \in \mathbb{R}$ and, in particular, $\lambda = \left\langle x, a_i \right\rangle/\|a_i\|^2$,
$$  \left\| \left(x -  \frac{\left\langle x, a_i \right\rangle}{\|a_i\|^2} a_i \right) +  \lambda a_i \right\| = \left\| \left(x -  \frac{\left\langle x, a_i \right\rangle}{\|a_i\|^2} a_i \right) -  \lambda a_i \right\|.$$
We study the operator $R$ throughout the rest of the proof. The main ingredient is to
show that $x$ and $R^k x$ are decorrelated: in expectation, their inner product is small. We expect this effect to get stronger as $k$ increases: the more random reflection we have performed, the less memory of the original point should remain.
\begin{lem} We have, for any $x \in \mathbb{R}^n$, and any $k \in \mathbb{N}$,
$$ \left| \mathbb{E}  \left\langle x, R^k x\right\rangle \right| \leq \left(1 - \frac{2}{\|A^{-1}\|^2 \|A\|_F^2}\right)^k \|x\|^2.$$
\end{lem}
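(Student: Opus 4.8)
The plan is to collapse the whole probabilistic quantity onto a single deterministic matrix. First I would compute the expectation of one reflection as a linear operator: averaging over the index $i$, picked with probability $\|a_i\|^2/\|A\|_F^2$, gives
$$ M := \mathbb{E}\, R = I - \frac{2}{\|A\|_F^2} \sum_{i=1}^{n} a_i a_i^T = I - \frac{2}{\|A\|_F^2} A^T A, $$
since the rows $a_i$ of $A$ satisfy $\sum_i a_i a_i^T = A^T A$. The key structural observation is that $R^k = R_1 R_2 \cdots R_k$ is a product of \emph{independent} copies of $R$, so the expectation factorizes, $\mathbb{E}[R^k] = (\mathbb{E}\, R)^k = M^k$, and therefore $\mathbb{E}\langle x, R^k x\rangle = \langle x, M^k x\rangle$. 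This converts the statement into a purely spectral estimate for the fixed symmetric matrix $M$.

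Next I would diagonalize $M$. Writing $\sigma_1 \ge \cdots \ge \sigma_n > 0$ for the singular values of $A$ (all positive because $A$ is invertible), the eigenvalues of $A^T A$ are the $\sigma_i^2$ and those of $M$ are $\mu_i = 1 - 2\sigma_i^2/\|A\|_F^2$. Expanding $x$ in an orthonormal eigenbasis $\{v_i\}$ of $M$ gives $\langle x, M^k x\rangle = \sum_i \mu_i^k \langle x, v_i\rangle^2$, and since the coefficients are nonnegative and sum to $\|x\|^2$, the triangle inequality yields $|\langle x, M^k x\rangle| \le \rho(M)^k \|x\|^2$, where $\rho(M) = \max_i |\mu_i|$ is the spectral radius. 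It then remains to show $\rho(M) \le 1 - 2\sigma_n^2/\|A\|_F^2 = 1 - 2/(\|A^{-1}\|^2 \|A\|_F^2)$, using $\|A^{-1}\| = 1/\sigma_n$ and $\|A\|_F^2 = \sum_i \sigma_i^2$.

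The main work is this spectral-radius bound, since the eigenvalues of $M$ must be controlled from both sides. The largest eigenvalue is $\mu_n = 1 - 2\sigma_n^2/\|A\|_F^2$, which matches the target exactly, so the positive side is immediate. The subtle point is that $M$ may have negative eigenvalues, the most negative being $\mu_1 = 1 - 2\sigma_1^2/\|A\|_F^2$, and I would need $|\mu_1| \le \mu_n$; after rearranging, this is equivalent to $\sigma_1^2 + \sigma_n^2 \le \sum_i \sigma_i^2 = \|A\|_F^2$, which holds whenever $n \ge 2$ because the right-hand side contains all $n$ positive squared singular values (the degenerate case $n=1$ can be checked directly). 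Combining this two-sided control of the spectrum with the eigenvector expansion completes the argument. I expect verifying that the negative eigenvalues do not exceed the target in magnitude to be the only genuinely nonobvious step.
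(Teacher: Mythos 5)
Your proof is correct, and its core is the same spectral estimate the paper uses: both arguments reduce everything to the symmetric matrix $M = \mathrm{Id} - \tfrac{2}{\|A\|_F^2}A^TA$, diagonalize it in the singular vectors of $A$, and control the negative end of the spectrum via the inequality $\sigma_1^2 + \sigma_n^2 \leq \|A\|_F^2$ (valid for $n \geq 2$), exactly as in the paper. Where you differ is in how the $k$-th power is handled. The paper peels off one reflection at a time, applies Cauchy--Schwarz against $R^{k-1}x$, uses $\|R^{k-1}x\| = \|x\|$, and invokes induction; as literally written that step only exhibits a single factor of $q$, and one has to read the intended iteration ($\mathbb{E}\langle x, R^kx\rangle = \langle M^k x, x\rangle$) between the lines. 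You instead use independence of the copies $R_1,\dots,R_k$ to factor the expectation, $\mathbb{E}[R_1\cdots R_k] = M^k$, which yields the exact identity $\mathbb{E}\langle x, R^kx\rangle = \sum_i \bigl(1 - \tfrac{2\sigma_i^2}{\|A\|_F^2}\bigr)^k\langle x, v_i\rangle^2$ before any estimation. This is cleaner and strictly stronger -- it is in effect the Proposition the paper proves later in Section 3 -- and it avoids any worry about whether the inductive step closes. Your aside about $n = 1$ is also well taken: there $M = -1$ and the claimed right-hand side is negative for odd $k$, so that degenerate case genuinely must be excluded or handled separately; the paper silently assumes $n \geq 2$ as well.
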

\begin{proof}
We start with the case $k=1$. There, we have
\begin{align*}
\left|  \mathbb{E}  \left\langle x, Rx \right\rangle \right| &= \left| \sum_{k=1}^{n}{\frac{\|a_k\|^2}{\|A\|_F^2} \left\langle x, x - 2\frac{\left\langle x, a_k \right\rangle }{\|a_k\|^2} a_k \right\rangle } \right| =\left| \|x\|^2  - \frac{2}{\|A\|_F^2} \sum_{k=1}^{n} \left\langle x, a_k\right\rangle^2 \right|\\
&=\left| \|x\|^2 - \frac{2}{\|A\|_F^2} \|Ax_k\|^2 \right| = \left| \left\langle x, x\right\rangle - \frac{2}{\|A\|_F^2} \left\langle Ax, Ax \right\rangle \right| \\
&= \left| \left\langle x - \frac{2}{\|A\|_F^2} A^T A x, x \right\rangle \right| \leq \left\| x - \frac{2}{\|A\|_F^2} A^T A x \right\| \left\|x \right\|.
\end{align*}
We can simplify this further to
$$ \left\| x - \frac{2}{\|A\|_F^2} A^T A x \right\| \|x\| \leq \left\| \mbox{Id}_{n \times n} - \frac{2}{\|A\|_F^2} A^T A \right\| \|x\|^2.$$
It remains to bound the norm of this symmetric matrix. We have
$$\sigma_n^2 \|x\| \leq  \|A^T A x \| \leq \sigma_1^2 \|x\|.$$
We will use this to conclude $$ \left\| \mbox{Id}_{n \times n} - \frac{2}{\|A\|_F^2} A^T A \right\|  \leq \max\left\{ 1 - \frac{2 \sigma_n^2}{\|A\|_F^2} , \frac{2\sigma_1^2}{\|A\|_F^2} - 1 \right\}$$
as follows: using $v_i$ to denote the singular vector associated to $A$, then these are the eigenvectors of $A^T A$ corresponding to eigenvalue $\sigma_i^2$. Thus, we have
\begin{align*}
 \left(\mbox{Id}_{n \times n} - \frac{2}{\|A\|_F^2} A^T A\right)x &= \sum_{i=1}^{n} \left\langle x, v_i \right\rangle v_i - \frac{2}{\|A\|_F^2} \sum_{i=1}^{n} \sigma_i^2 \left\langle x, v_i \right\rangle v_i  \\
 &=  \sum_{i=1}^{n} \left(1 - \frac{2}{\|A\|_F^2}  \sigma_i^2 \right) \left\langle x, v_i \right\rangle v_i.
 \end{align*}
Observing that $0< \sigma_n \leq \dots \leq \sigma_1$ and hence, for all $1 \leq i \leq n$,
$$ -1 \leq 1 - \frac{2}{\|A\|_F^2}  \sigma_1^2 \leq 1 - \frac{2}{\|A\|_F^2}  \sigma_i^2 \leq 1 - \frac{2}{\|A\|_F^2}  \sigma_n^2 \leq 1$$
from which we deduce
$$ \left|  1 - \frac{2}{\|A\|_F^2}  \sigma_i^2 \right| \leq \max\left\{ 1 - \frac{2 \sigma_n^2}{\|A\|_F^2} , \frac{2\sigma_1^2}{\|A\|_F^2} - 1 \right\}.$$
We recall that the Frobenius norm is related to the sum of the squared singular values via
$$ \|A\|_F^2 = \sum_{k=1}^{n}{\sigma_k^2}$$
 and thus
\begin{align*}
\frac{2\sigma_1^2}{\|A\|_F^2} - 1 &=  \frac{2\sigma_1^2 - \sum_{k=1}^{n}\sigma_k^2}{\|A\|_F^2} =  \frac{\sigma_1^2 - \sum_{k=2}^{n}\sigma_k^2}{\|A\|_F^2} \\
&\leq \frac{\sigma_1^2 + \sigma_n^2 - 2\sigma_n^2}{\|A\|_F^2} \leq 1 - \frac{2 \sigma_n^2}{\|A\|_F^2}.
\end{align*}
Therefore, recalling that for invertible $A \in \mathbb{R}^{n \times n}$, we have $\sigma_n = \|A^{-1}\|$
$$ \left\| \mbox{Id}_{n \times n} - \frac{2}{\|A\|_F^2} A^T A \right\|  \leq  1 - \frac{2 \sigma_n^2}{\|A\|_F^2} = 1 - \frac{2}{\|A\|_F^2 \|A^{-1}\|^2}$$
and the case $k=1$ follows. Let now $k \geq 2$ and let $x,y \in \mathbb{R}^n$ be arbitrary. Then
\begin{align*}
  \mathbb{E} \left\langle y, R^k x \right\rangle  &= \sum_{k=1}^{n}{\frac{\|a_k\|^2}{\|A\|_F^2} \left\langle y, R^{k-1} x - 2\frac{\left\langle R^{k-1} x, a_k \right\rangle }{\|a_k\|^2} a_k \right\rangle } \\
&=   \left\langle y, R^{k-1} x\right\rangle - \frac{2}{\|A\|_F^2} \sum_{k=1}^{n} \left\langle y, a_k\right\rangle \left\langle R^{k-1} x, a_k \right\rangle \\
&=   \left\langle y, R^{k-1} x\right\rangle - \frac{2}{\|A\|_F^2} \left\langle Ay, A R^{k-1} x\right\rangle\\
&=   \left\langle y - \frac{2}{\|A\|_F^2} A^T A y, R^{k-1} x \right\rangle  \\
&=   \left\langle \left(\mbox{Id}_{n \times n}  - \frac{2}{\|A\|_F^2} A^T A\right) y, R^{k-1} x \right\rangle.
\end{align*}
Since $y$ was completely arbitrary, we can use the identity iteratively to see that
$$   \mathbb{E} \left\langle y, R^k x \right\rangle   =  \left\langle \left(\mbox{Id}_{n \times n}  - \frac{2}{\|A\|_F^2} A^T A\right)^k y,  x \right\rangle$$
from which we deduce
$$ \left|  \mathbb{E} \left\langle y, R^k x \right\rangle \right| \leq \left( 1 - \frac{2}{\|A\|_F^2 \|A^{-1}\|^2} \right)^k \left| \left\langle x, y \right\rangle \right|.$$
\end{proof}

\begin{proof}[Proof of Theorem 1] We square the expected value and use the fact that long-range interactions are exponentially decaying in expectation. Clearly,
\begin{align*}
 \mathbb{E} \left\| \sum_{k=1}^{m}{ R^{k-1} x} \right\|^2 &= \mathbb{E} \left\langle \sum_{k=1}^{m}{ R^{k-1} x}, \sum_{k=1}^{m}{ R^{k-1} x} \right\rangle \\
 &=\mathbb{E} \sum_{k, \ell=1}^{m}  \left\langle R^{k-1} x, R^{\ell-1} x\right\rangle.
 \end{align*}
 We first observe that $R$ preserves the norm and thus we can separate diagonal and off-diagonal terms into 
 \begin{align*}
  \mathbb{E} \sum_{k, \ell=1}^{m}  \left\langle R^{k-1} x, R^{\ell-1} x\right\rangle &= m\cdot \|x\|^2 + \mathbb{E} \sum_{k, \ell=1 \atop k \neq \ell}^{m} \left\langle R^{k-1} x, R^{\ell-1} x\right\rangle\\
 &= m\cdot \|x\|^2 + 2 \cdot \mathbb{E}\sum_{k=1}^{m-1} \sum_{\ell=k+1}^{m}  \left\langle R^{k-1} x, R^{\ell-1} x\right\rangle.
 \end{align*}
 We now fix $k$ and estimate, using the Lemma and $\|Rx\| = \|x\|$, 
\begin{align*}
 \left| \mathbb{E} \sum_{\ell=k+1}^{m}  \left\langle R^{k-1} x, R^{\ell-1} x\right\rangle \right| &\leq \sum_{\ell=k+1}^{m}  \left| \mathbb{E} \left\langle \left[R^{k-1} x\right], R^{\ell-k} \left[ R^{k-1} x \right]\right\rangle \right| \\
 &\leq  \sum_{\ell=k+1}^{m} \left( 1 - \frac{2 }{\|A\|_F^2 \|A^{-1}\|^2}\right)^{\ell-k} \|x\|^2.
 \end{align*}
 Abbreviating $ q = 1 - 2/(\|A\|_F^2 \|A^{-1}\|^2)$
 we have
 \begin{align*}
   \mathbb{E} \left\| \sum_{k=1}^{m}{ R^{k-1} x} \right\|^2 &\leq m \cdot \|x\|^2 + 2 \|x\|^2 \sum_{k=1}^{m-1} \sum_{\ell=1}^{m-k} q^{\ell} \\
   &\leq m \cdot \|x\|^2 + 2 \|x\|^2  \frac{m}{1-q}\\
   &= m\cdot \|x\|^2 \cdot \left( 1+    \|A\|_F^2 \|A^{-1}\|^2\right)
 \end{align*}  
from which the result follows after dividing by $m^2$ on both sides.
\end{proof}

\section{Concluding Remarks}
We conclude with some remarks. As above, by linearity, we can restrict ourselves to solving $Ax = 0$ via random reflections that act as
$$ R x =  x - 2 \cdot \frac{\left\langle x, a_i \right\rangle}{\|a_i\|^2} a_i \qquad \mbox{with likelihood}~~\frac{\|a_i\|_2}{\|A\|_F^2}.$$
We can also assume, again by linearity, that $\|x_1\|=1$ in which case the arising sequence lies on the unit sphere $\left\{x \in \mathbb{R}^n: \|x\|=1\right\}$.
It would be interesting to understand whether, as the number of points tends to infinity, whether they end up being distributed according to some limiting distribution and, if that is the case, how quickly convergence to
the limiting distribution happens. It seems likely that both limiting distribution
and convergence rate should depend on the singular vectors and singular
values. A supporting fact (motivated by \cite[Theorem 1]{stein}) is as follows.
\begin{proposition} Let $v_{\ell}$ be a singular vector of $A$. Then, for any $x \in \mathbb{R}^n$,
$$\mathbb{E} \left\langle R^kx, v_{\ell} \right\rangle = \left(1 - \frac{2 \sigma_{\ell}^2}{\|A\|_F^2} \right)^k \left\langle x, v_{\ell} \right\rangle.$$
\end{proposition}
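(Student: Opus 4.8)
The plan is to reduce everything to a single computation of the \emph{averaged} one-step operator $\mathbb{E}[R]$, which will turn out to be exactly the symmetric matrix already analyzed in the Lemma. The gain over the Lemma is that projecting onto a fixed singular direction $v_\ell$ produces an exact identity rather than a bound, because that direction is an eigenvector of the averaged operator.

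First I would compute $\mathbb{E}[R]$ directly. Writing out the definition of $R$ (with the $i$-th reflection chosen with probability $\|a_i\|^2/\|A\|_F^2$, as in the Lemma) and using $\sum_{i=1}^n a_i a_i^T = A^T A$,
$$\mathbb{E}[R] = \sum_{i=1}^{n} \frac{\|a_i\|^2}{\|A\|_F^2}\left(\mathrm{Id}_{n\times n} - 2\frac{a_i a_i^T}{\|a_i\|^2}\right) = \mathrm{Id}_{n\times n} - \frac{2}{\|A\|_F^2} A^T A,$$
which is precisely the symmetric matrix whose eigenvalues were identified in the proof of the Lemma. Since $v_\ell$ is a singular vector of $A$, it is an eigenvector of $A^T A$ with eigenvalue $\sigma_\ell^2$, and therefore an eigenvector of $\mathbb{E}[R]$:
$$\mathbb{E}[R]\, v_\ell = \left(1 - \frac{2\sigma_\ell^2}{\|A\|_F^2}\right) v_\ell.$$

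Next I would use independence. Because $R^k = R_1 R_2 \cdots R_k$ is a product of independent, identically distributed copies of $R$, the expectation factorizes as $\mathbb{E}[R^k] = (\mathbb{E}[R])^k$. Combining this with the eigenvector relation and the symmetry of $\mathbb{E}[R]$, I would move the operator onto the second argument,
$$\mathbb{E} \left\langle R^k x, v_\ell \right\rangle = \left\langle (\mathbb{E}[R])^k x, v_\ell \right\rangle = \left\langle x, (\mathbb{E}[R])^k v_\ell \right\rangle = \left(1 - \frac{2\sigma_\ell^2}{\|A\|_F^2}\right)^k \left\langle x, v_\ell \right\rangle,$$
which is the claim.

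The computation itself is short; the only points requiring care are probabilistic rather than algebraic. One must justify the factorization $\mathbb{E}[R_1 \cdots R_k] = \mathbb{E}[R_1]\cdots\mathbb{E}[R_k]$ from the independence of the copies (equivalently, one can condition on $R_2, \dots, R_k$ and apply the one-step identity to the fixed vector $R_2\cdots R_k\, x$, then induct on $k$), and one must use that $\mathbb{E}[R]$ is symmetric in order to transfer $(\mathbb{E}[R])^k$ from the first to the second slot of the inner product. Conceptually, this is the whole content of the Proposition: the averaged dynamics is governed by the single symmetric matrix $\mathrm{Id}_{n\times n} - 2 A^T A/\|A\|_F^2$, which is diagonalized by the singular vectors, so each singular direction decays in expectation at its own exact geometric rate.
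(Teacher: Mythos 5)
Your proposal is correct and is essentially the paper's own argument: the paper computes $\mathbb{E}\left\langle Rx, v_{\ell}\right\rangle$ scalar-wise, which amounts to exactly your identity $\mathbb{E}[R] = \mathrm{Id}_{n\times n} - \tfrac{2}{\|A\|_F^2}A^TA$ applied to the eigenvector $v_{\ell}$ of $A^TA$, and then iterates via independence just as you do. Your operator-level phrasing (with the explicit remark on factorizing $\mathbb{E}[R_1\cdots R_k]$ and on the symmetry of $\mathbb{E}[R]$) is a slightly more careful packaging of the same computation.
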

\begin{proof} A simple computation shows
\begin{align*}
\mathbb{E} \left\langle Rx, v_{\ell} \right\rangle &= \sum_{k=1}^{n} \frac{\|a_k\|^2}{\|A\|_F^2} \left\langle x - 2\frac{\left\langle x, a_k\right\rangle}{\|a_k\|^2} a_k, v_{\ell} \right\rangle \\
&= \left\langle x, v_{\ell} \right\rangle - \frac{2}{\|A\|_F^2} \sum_{k=1}^{n} \left\langle x, a_k\right\rangle  \left\langle a_k , v_{\ell} \right\rangle \\
&= \left\langle x, v_{\ell} \right\rangle - \frac{2}{\|A\|_F^2} \left\langle Ax, A v_{\ell} \right\rangle \\
&= \left\langle x, v_{\ell} \right\rangle - \frac{2}{\|A\|_F^2} \left\langle x, A^TA v_{\ell} \right\rangle \\
&= \left(1 - \frac{2 \sigma_{\ell}^2}{\|A\|_F^2} \right) \left\langle x, v_{\ell} \right\rangle.
\end{align*}
Naturally, since multiple applications of $R$ are independent, this also shows that
$$\mathbb{E} \left\langle R^kx, v_{\ell} \right\rangle = \left(1 - \frac{2 \sigma_{\ell}^2}{\|A\|_F^2} \right)^k \left\langle x, v_{\ell} \right\rangle.$$
\end{proof}

This suggests that small singular values are connected to bad mixing properties (and vice versa); we note a similar issue arises for the Random
Kaczmarz method, see \cite{stein, stein3}.  In summary, we see that\\

\begin{center}
\begin{tikzpicture}
\node at (0,0) {Solving Linear Systems};
\node at (0,-0.4) {is hard};
\node at (2.2,-0.2) {$\Leftrightarrow$};
\node at (4.3,-0.2) {Small Singular Values};
\node at (6.4,-0.2) {$\Leftrightarrow$};
\node at (7.5,-0) {$R$ mixes};
\node at (7.5,-0.4) {slowly};
\end{tikzpicture}
\end{center}
 We illustrate this with yet another identity (related to \cite[Theorem 3]{stein}).
 
\begin{proposition}
If $x$ is the linear combination of singular vectors corresponding to small singular values, then $Rx$ is often being projected to a point rather close to $x$.
\end{proposition}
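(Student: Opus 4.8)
The plan is to make this informal statement precise by computing the expected squared displacement $\mathbb{E} \|Rx - x\|^2$ and showing that it is governed by exactly those singular values appearing in the support of $x$. Since $\|Rx - x\|$ measures how far a single reflection moves $x$, a small expected squared displacement is precisely the assertion that $Rx$ is usually close to $x$.

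First I would compute the displacement for a single reflection. If the $i$-th row is selected, then
$$ Rx - x = -2 \frac{\langle x, a_i \rangle}{\|a_i\|^2} a_i, \qquad \text{so} \qquad \|Rx - x\|^2 = 4 \frac{\langle x, a_i \rangle^2}{\|a_i\|^2}.$$
Averaging over the random choice of row, where the $i$-th row is chosen with probability $\|a_i\|^2 / \|A\|_F^2$, the weight $\|a_i\|^2$ cancels the denominator and I obtain
$$ \mathbb{E} \|Rx - x\|^2 = \sum_{i=1}^n \frac{\|a_i\|^2}{\|A\|_F^2} \cdot 4 \frac{\langle x, a_i \rangle^2}{\|a_i\|^2} = \frac{4}{\|A\|_F^2} \sum_{i=1}^n \langle x, a_i \rangle^2 = \frac{4 \|Ax\|^2}{\|A\|_F^2},$$
using that $\langle x, a_i \rangle = (Ax)_i$ and hence $\sum_{i} \langle x, a_i\rangle^2 = \|Ax\|^2$.

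The next step is to expose the dependence on the singular values. Writing $x = \sum_\ell \langle x, v_\ell\rangle v_\ell$ in the orthonormal basis of singular vectors of $A$, we have $\|Ax\|^2 = \sum_\ell \sigma_\ell^2 \langle x, v_\ell\rangle^2$, so that
$$ \mathbb{E} \|Rx - x\|^2 = \frac{4}{\|A\|_F^2} \sum_\ell \sigma_\ell^2 \langle x, v_\ell \rangle^2.$$
In particular, if $x$ is supported on singular vectors whose singular values satisfy $\sigma_\ell \le \tau$, then $\mathbb{E}\|Rx - x\|^2 \le 4\tau^2 \|x\|^2 / \|A\|_F^2$, which is small precisely when $\tau$ is small relative to $\|A\|_F$. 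A single application of Markov's inequality then converts this into the quantitative ``often close'' phrasing: for any $\varepsilon > 0$,
$$ \mathbb{P}\left( \|Rx - x\| \ge \varepsilon \right) \le \frac{4 \|Ax\|^2}{\varepsilon^2 \|A\|_F^2}.$$

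I do not expect a genuine computational obstacle here; the whole argument is a one-line second-moment computation followed by diagonalization in the singular basis. The only real decision is choosing the quantity that formalizes ``often $\dots$ rather close,'' and the cleanest choice is the expected squared displacement, since it factors through $\|Ax\|$ automatically and makes the role of the small singular values transparent. The one point that demands care is the bookkeeping of the sampling probabilities: it is precisely the weighting $\|a_i\|^2/\|A\|_F^2$ (which sums to one) that causes the factor $\|a_i\|^2$ to cancel, whereas any other normalization would leave a residual dependence on the individual row norms.
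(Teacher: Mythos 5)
Your proposal is correct and is essentially the paper's argument in a slightly different guise: the paper computes $\mathbb{E}\langle Rx, x\rangle = \|x\|^2 - \tfrac{2}{\|A\|_F^2}\|Ax\|^2$ and applies Markov, and since $\|Rx\|=\|x\|$ gives $\|Rx-x\|^2 = 2\|x\|^2 - 2\langle Rx, x\rangle$, your identity $\mathbb{E}\|Rx-x\|^2 = \tfrac{4}{\|A\|_F^2}\|Ax\|^2$ is the same computation. Your phrasing via the expected squared displacement, with the explicit diagonalization $\|Ax\|^2 = \sum_\ell \sigma_\ell^2 \langle x, v_\ell\rangle^2$, is if anything a marginally cleaner way to make the informal statement quantitative.
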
 
 \begin{proof}  Recalling an argument from the proof of the Lemma,
 \begin{align*}
 \mathbb{E}  \left\langle Rx, x \right\rangle  &= \sum_{k=1}^{n} \frac{\|a_k\|^2}{\|A\|_F^2} \left\langle x - 2\frac{\left\langle x, a_k\right\rangle}{\|a_k\|^2} a_k, x \right\rangle\\
&=  \|x\|^2 - \frac{2}{\|A\|_F^2} \sum_{k=1}^{n} \left\langle x, \left\langle x, a_k\right\rangle a_k \right\rangle\\
&=  \|x\|^2 - \frac{2}{\|A\|_F^2} \sum_{k=1}^{n} \left\langle x,  a_k \right\rangle^2\\
&= \|x\|^2 - \frac{2}{\|A\|_F^2}  \|Ax\|^2.
\end{align*}
This implies the desired statement: for vectors that are linear combinations of small singular vectors, we have that $\|Ax\|^2$ is quite small implying that $ \mathbb{E}  \left\langle Rx, x \right\rangle$ is close to 1. Since $\left\langle Rx, x \right\rangle \leq \|Rx\| \|x\| = 1$, this means, via Markov's inequality, that $Rx$ must, typically, be very close to $x$. 
\end{proof}
 
 One of the main points of this paper is that solving linear systems is at most as hard as finding the center of sphere from knowing many points on it.  Badly conditioned linear systems are hard to solve because singular vectors are small which corresponds to random reflections $R$ slowly exploring the sphere. 
 \begin{quote}
 \textbf{Question.} Are there computationally cheap ways of selecting the order of reflection deterministically in such a way that points end up being better distributed on the sphere?
 \end{quote}


\begin{thebibliography}{10}

\bibitem{sart} A. Andersen and A. Kak, Simultaneous Algebraic Reconstruction Technique (SART): A superior implementation of the ART algorithm, \textit{Ultrasonic Imaging}
\textbf{ 6} (1984),p.81--94

\bibitem{cenker} C. Cenker, H. G. Feichtinger, M. Mayer, H. Steier, and T. Strohmer, New
variants of the POCS method using affine subspaces of finite codimension, with
applications to irregular sampling. Proc. SPIE: Visual Communications and
Image Processing, p. 299--310, 1992.

\bibitem{cav} Y. Censor, D. Gordon and R. Gordon, Component averaging: an efficient iterative parallel algorithm for large and sparse unstructured problems, \textit{Parallel Computing} \textbf{27} (2001), p. 777 -- 808.

\bibitem{drop} Y. Censor, T. Elfving, G.Herman and T. Nikazad, On Diagonally-Relaxed Orthogonal Projection Methods, \textit{SIAM Journal on Scientific Computing} \textbf{30} (2008):p.473--504.

\bibitem{cim} G. Cimmino, Cacolo approssimato per le soluzioni dei systemi di equazioni lineari. \textit{La Ricerca Scientifica (Roma)} \textbf{1} (1938): p. 326--333.

\bibitem{deutsch} F. Deutsch. Rate of convergence of the method of alternating projections. \textit{Internat. Schriftenreihe
Numer. Math.}, \textbf{72} (1985), p.96--107.

\bibitem{deutsch2} F. Deutsch and H. Hundal. The rate of convergence for the method of alternating projections, II. \textit{J.
Math. Anal. Appl.}, \textbf{205} (1997), p. 381--405.

\bibitem{eldar} Y. C. Eldar and D. Needell. Acceleration of randomized Kaczmarz method via the Johnson-Lindenstrauss lemma. \textit{Numer. Algorithms}, \textbf{58} (2011):p. 163--177.

\bibitem{elf} T. Elfving, P.-C. Hansen and T. Nikazad, Semi-convergence properties of
Kaczmarz’s method, \textit{Inverse Problems} \textbf{30} (2014),  055007 

\bibitem{elf2} T. Elfving, T. Nikazad, and C. Popa, A class of iterative methods:
semi-convergence, stopping rules, inconsistency, and constraining; in Y.
Censor, Ming Jiang, and Ge Wang (Eds.), “Biomedical Mathematics:
Promising Directions in Imaging, Therapy Planning, and Inverse
Problems,” Medical Physics Publishing, Madison, Wisconsin, 2010.

\bibitem{gal} A. Galantai. On the rate of convergence of the alternating projection method in finite dimensional ´
spaces. \textit{J. Math. Anal. Appl.}, \textbf{310} (2005), p. 30--44.

\bibitem{gordon} R. Gordon, R. Bender and G. Herman, Algebraic reconstruction techniques (ART) for threedimensional electron microscopy and x-ray photography, \textit{Journal of Theoretical Biology} \textbf{29} (1970): p. 471--481

\bibitem{gordon2} D. Gordon, A derandomization approach to recovering bandlimited
signals across a wide range of random sampling rates, \textit{Numer. Algor.} \textbf{77} (2018): p. 1141--1157

\bibitem{gower2} R. M. Gower, D. Molitor, J. Moorman and D. Needell, Adaptive Sketch-and-Project Methods for Solving Linear Systems, arXiv:1909.03604

\bibitem{gower} R. M. Gower and P. Richtarik. Randomized iterative methods for linear systems. \textit{SIAM J. Matrix
Anal. Appl.}, \textbf{36} (2015):1660--1690.

\bibitem{hansen} P. C. Hansen and M. Saxild-Hansen, AIR Tools -- A MATLAB package
of algebraic iterative reconstruction methods, \textit{J. Comp. Appl. Math.},
\textbf{236} (2012), pp. 2167--2178.

\bibitem{her}  G.T. Herman. Image reconstruction from projections. Academic Press Inc.
[Harcourt Brace Jovanovich Publishers], New York, 1980. The fundamentals of
computerized tomography, Computer Science and Applied Mathematics.

\bibitem{her2} G. T. Herman and L. B. Meyer. Algebraic reconstruction techniques can be made computationally
efficient. \textit{IEEE Trans. Medical Imaging}, \textbf{12} (1993): p. 600--609.


\bibitem{jiang}  M. Jiang and G. Wang. Convergence studies on iterative algorithms for image
reconstruction. \textit{IEEE Transactions on Medical Imaging} \textbf{22}, 2003.

\bibitem{jiao} Y. Jiao, B. Jin and X. Lu, Preasymptotic Convergence of Randomized Kaczmarz Method,  \textit{Inverse Problems} \textbf{33} (2017), article: 125012

\bibitem{kac} S. Kaczmarz, Angenaherte Auflosung von Systemen linearer Gleichungen, \textit{Bulletin International de l'Academie Polonaise des Sciences et des Lettres. Classe des Sciences Mathematiques et Naturelles. Serie A, Sciences Mathematiques}, \textbf{35} (1937), pp. 355--357

\bibitem{lee} Y.-T. Lee and A. Sidford, Efficient Accelerated Coordinate Descent Methods and Faster Algorithms for Solving Linear Systems, FOCS 2013

\bibitem{leventhal} D. Leventhal and A. S. Lewis, Randomized Methods for Linear Constraints:
Convergence Rates and Conditioning, \textit{Mathematics of Operation Research}, \textbf{35} (2010), p. 641--654

\bibitem{liu} J. Liu and S. Wright, An accelerated randomized Kaczmarz algorithm, \textit{Math. Comp.} \textbf{85} (2016), p. 153-178

\bibitem{man} A. Ma, and D Needell, Stochastic gradient descent for linear systems with missing data.
\textit{Numer. Math. Theory Methods Appl.} \textbf{12} (2019), p. 1--20.

\bibitem{ma} A. Ma, D Needell and A Ramdas, Convergence properties of the randomized extended Gauss--Seidel and Kaczmarz methods, \textit{SIAM J. Matrix Anal. Appl.} \textbf{36} (2015), p. 1590--1604

\bibitem{moor} J. Moorman, T. Tu, D. Molitor and D. Needell, Randomized Kaczmarz with Averaging,  arXiv:2002.04126

\bibitem{natterer} F. Natterer. The Mathematics of Computerized Tomography. Wiley, New York, 1986.

\bibitem{need} D. Needell. Randomized Kaczmarz solver for noisy linear systems. \textit{BIT Numerical Mathematics}, \textbf{50} (2010): p. 395--403.

\bibitem{need2} D. Needell and J. Tropp, Paved with good intentions: Analysis
of a randomized block Kaczmarz method, \textit{Linear Algebra and its Applications} \textbf{441} (2014), p. 199--221

\bibitem{need25} D. Needell and R. Ward, Two-Subspace Projection Method for Coherent
Overdetermined Systems, \textit{J. Fourier Anal Appl} \textbf{19} (2013), p. 256--269.

\bibitem{need3} D. Needell, R. Ward and N. Srebro, Stochastic gradient descent, weighted sampling, and the randomized Kaczmarz algorithm, Advances in Neural Information Processing Systems, p. 1017--1025


\bibitem{need4} D. Needell, R. Zhao and A. Zouzias, Randomized block Kaczmarz method with projection for solving least squares, \textit{Linear Algebra and its Applications} \textbf{484} (2015), p. 322--343

\bibitem{nutini} J. Nutini, B. Sepehry, I. Laradji, M. Schmidt, H. Koepke, A. Virani, Convergence Rates for Greedy Kaczmarz Algorithms,
and Faster Randomized Kaczmarz Rules Using the Orthogonality Graph, The 32th Conference on Uncertainty in Artificial Intelligence, 2016.

\bibitem{popa} C. Popa, Convergence rates for Kaczmarz-type algorithms, \textit{Numer. Algor.} \textbf{79} (2018): p. 1--17

\bibitem{sezan}  K.M. Sezan and H. Stark. Applications of convex projection theory to image
recovery in tomography and related areas. In H. Stark, editor, Image Recovery:
Theory and application, pages 415--462. Acad. Press, 1987

\bibitem{stein} S. Steinerberger, Randomized Kaczmarz converges along small singular vectors,  arXiv:2006.16978

\bibitem{stein2} S. Steinerberger, A Weighted Randomized Kaczmarz Method for Solving Linear Systems, arXiv:2007.02910

\bibitem{stein3} S. Steinerberger, On the Regularization Effect of Stochastic Gradient Descent applied to Least Squares, arXiv:2007.13288

\bibitem{strohmer} T. Strohmer and R. Vershynin, A randomized Kaczmarz algorithm for linear systems with exponential convergence, \textit{Journal of Fourier Analysis and Applications} \textbf{15} (2009): p. 262--278


\bibitem{zhang} J.-J. Zhang, A new greedy Kaczmarz algorithm for the solution of very large linear systems, \textit{Applied Mathematics Letters} \textbf{91} (2019), p. 207--212

\bibitem{zouz} A. Zouzias and N. M. Freris. Randomized extended Kaczmarz for solving least squares. \textit{SIAM J.
Matrix Anal. Appl.} \textbf{34}: p. 773--793, 2013.

\end{thebibliography}
\end{document}